\documentclass[12pt]{article}
\usepackage{lmodern}
\usepackage{microtype}
\usepackage[bookmarks=false]{hyperref}

\usepackage[english]{babel}
\usepackage[all]{xy}
\usepackage{fancyhdr}
\usepackage{setspace, amsmath, amsthm, amssymb, amsfonts, amscd, epic, graphicx, ulem, dsfont}
\usepackage{amsthm}
\usepackage[T1]{fontenc}
\usepackage{cases}
\newtheorem{theorem}{Theorem}

\newtheorem{corollary}[theorem]{Corollary}
\newtheorem{example}[theorem]{Example}

\newtheorem{remark}[theorem]{Remark}

\makeatletter

\@addtoreset{equation}{section}
\makeatother

\title{On generalized $\xi$-Parallel Maps}

\author{Ahmed Mohammed Cherif\footnote{University Mustapha Stambouli Mascara, Faculty of Exact Sciences, Mascara 29000, Algeria. Email: a.mohammedcherif@univ-mascara.dz}}
\date{}

\begin{document}
\maketitle
	
\begin{abstract}
In this paper, we introduce and study the notion of generalized $\xi$-parallel maps between Riemannian manifolds, where $\xi$ is a vector field on the target manifold. We define the associated energy functional and derive its first variation formula, leading to the Euler--Lagrange equation characterizing generalized $\xi$-parallel maps. We establish fundamental properties of this new class of maps and investigate its relationship with harmonic and biharmonic maps. Furthermore, we study generalized $\xi$-parallel curves in space forms and examine generalized $\xi$-parallel submanifolds of space forms admitting concircular vector fields.
\begin{flushleft}
\textit{Keywords:}  Biharmonic maps, Parallel vector fields, Submanifolds.\\
\textit{Mathematics Subject Classification 2020:} 53A45, 53C20, 58E20.
\end{flushleft}
\end{abstract}

\section{Introduction}
 The following definition extends the classical notion of a vector field parallel along a curve to the more general setting of smooth maps between Riemannian manifolds \cite{ON}. Let $\varphi:(M,g)\to(N,h)$ be a smooth map between Riemannian manifolds. A section $V \in \Gamma(\varphi^{-1}TN)$ is called parallel along $\varphi$ if
\begin{equation}\label{eq1.1}
\nabla^{\varphi}_X V = 0, \quad \forall X \in \Gamma(TM),
\end{equation}
where $\nabla^\varphi$ denotes the pullback connection induced by the Levi-Civita connection on $(N,h)$ \cite{baird,ES,ON,YX}. \\
Let $\xi\in\Gamma(TN)$ be a nonzero vector field. A map $\varphi:(M,g)\to(N,h)$ is called $\xi$-parallel if the section $\xi \circ \varphi \in \Gamma(\varphi^{-1}TN)$ is parallel along $\varphi$. Note that every smooth map $\varphi:(M,g)\to\mathbb{R}^n$ is $\xi$-parallel, since $\mathbb{R}^n$ admits nonzero parallel vector fields with respect to the Euclidean connection. On the other hand, the identity map on the unit sphere equipped with its standard metric is not $\xi$-parallel, since the sphere admits no nonzero parallel vector fields. The above examples show that there exist both $\xi$-parallel and non $\xi$-parallel maps. In particular, the existence of $\xi$-parallel maps depends on the geometry of the target manifold $(N,h)$. Therefore, the study of this class of maps provides valuable information about both the map itself and the geometry of its codomain.\\
We now generalize the above definition by introducing the following $\xi$-energy functional
\begin{equation}\label{eq1.2}
E_\xi(\varphi;D)=\frac{1}{2}\int_{D}\big|\nabla^\varphi(\xi\circ\varphi)\big|^2\,v_g,
\end{equation}
where $D\subset M$ is an arbitrary compact domain, $\xi$ is a fixed vector field on $N$, and $v_g$ denotes the Riemannian volume element on $(M,g)$.\\
Observe that, with respect to any local orthonormal frame $\{e_i\}_{i=1}^m$ on $(M,g)$, the $\xi$-energy density of $\varphi$ is given by
\begin{equation}\label{eq1.3}
\big|\nabla^\varphi(\xi\circ\varphi)\big|^2
=\sum_{i=1}^m h\big(\nabla^\varphi_{e_i}(\xi\circ\varphi),\nabla^\varphi_{e_i}(\xi\circ\varphi)\big).
\end{equation}
A smooth map $\varphi:(M,g)\to(N,h)$ between Riemannian manifolds is called a generalized $\xi$-parallel map if it is a critical point of the $\xi$-energy
functional for some nonzero vector field $\xi\in\Gamma(TN)$. \\
The main purpose of this paper is to introduce and investigate the notion of generalized $\xi$-parallel maps between Riemannian manifolds and to study its geometric implications for curves and submanifolds. After defining the associated energy functional, we derive its first variation formula and obtain the corresponding Euler--Lagrange equation, which characterizes generalized $\xi$-parallel maps. We then establish several fundamental properties of this new class of maps and examine their relationship with the classical notion of $\xi$-parallel maps.\\
We show that, when $\xi$ is a concurrent vector field \cite{KR,YN}, the notion of generalized $\xi$-parallel maps reduces to that of harmonic maps, providing a natural generalization of harmonic map theory \cite{baird,ES,YX}.
In particular, under suitable assumptions on the vector field $\xi$, we construct examples of generalized $\xi$-parallel maps that are not $\xi$-parallel. We also investigate generalized $\xi$-parallel curves in space forms and characterize generalized $\xi$-parallel submanifolds in space forms admitting a concircular vector \cite{KR,YN,YN2} field.


\section{Generalized $\xi$-parallel maps}

The following theorem establishes the first variation formula for the
energy functional $E_\xi$ and introduces the associated $\xi$-tension field
of generalized $\xi$-parallel maps, denoted by $\tau_{\xi}(\varphi)$. In particular,
a smooth map $\varphi$ is a critical point of $E_\xi$ if and only if $\tau_{\xi}(\varphi)=0.$

\begin{theorem}\label{th2.1}
Let $\varphi:(M,g)\to(N,h)$ be a smooth map between Riemannian manifolds, and let
$\{\varphi_t\}_{t\in(-\epsilon,\epsilon)}$ be a smooth variation of $\varphi$ supported in a compact domain $D\subset M$. Then
\begin{equation}\label{eq2.1}
\frac{d}{dt}E_{\xi}(\varphi_t;D)\Big|_{t=0}
= -\int_{D} h\big(v,\tau_{\xi}(\varphi)\big)\, v_g,
\end{equation}
where $v=\big.\frac{d\varphi_t}{dt}\big|_{t=0}$ denotes the variation vector field of $\{\varphi_t\}_{t\in(-\epsilon,\epsilon)}$, $\tau_{\xi}(\varphi)$ is the $\xi$-tension field of $\varphi$ given by
\begin{equation}\label{eq2.2}
\tau_{\xi}(\varphi)
= \operatorname{Tr} R^{N}\big(\xi\circ\varphi,\sigma \circ d\varphi\big)d\varphi
+ \sigma^{*}\big(\operatorname{Tr}\, (\nabla^\varphi)^2 (\xi\circ\varphi)\big),
\end{equation}
$\sigma:\Gamma(TN)\to\Gamma(TN)$ is defined by $\sigma(X)=\nabla^N_X\xi$, and $\sigma^{*}$ denotes the adjoint of $\sigma$.
Here, $\nabla^N$ denotes the Levi--Civita connection, and $R^N$ denotes the Riemannian curvature of $(N,h)$.
\end{theorem}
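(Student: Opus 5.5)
Define $\Phi:D\times(-\epsilon,\epsilon)\to N$ by $\Phi(x,t)=\varphi_t(x)$, and work with the pullback bundle $\Phi^{-1}TN$ and its connection $\nabla^\Phi$. Fix a local orthonormal frame $\{e_i\}$ on $M$ that is normal at the point under consideration, extended trivially in $t$, so that $[\partial_t,e_i]=0$. Write $V_i=\nabla^\Phi_{e_i}(\xi\circ\Phi)$. Differentiating under the integral sign (legitimate since the variation is supported in the compact set $D$) gives
\[
\frac{d}{dt}E_\xi(\varphi_t;D)=\int_D\sum_i h\big(\nabla^\Phi_{\partial_t}V_i,\,V_i\big)\,v_g .
\]

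\textbf{Commuting the derivatives.} The key step is to interchange $\nabla^\Phi_{\partial_t}$ and $\nabla^\Phi_{e_i}$ using the curvature of the pullback connection. Since $[\partial_t,e_i]=0$,
\[
\nabla^\Phi_{\partial_t}\nabla^\Phi_{e_i}(\xi\circ\Phi)=\nabla^\Phi_{e_i}\nabla^\Phi_{\partial_t}(\xi\circ\Phi)+R^N\big(d\Phi(\partial_t),d\Phi(e_i)\big)(\xi\circ\Phi).
\]
Moreover, by the chain rule for pullback connections,
\[
\nabla^\Phi_{\partial_t}(\xi\circ\Phi)=\nabla^N_{d\Phi(\partial_t)}\xi=\sigma\big(d\Phi(\partial_t)\big),
\]
which equals $\sigma(v)$ at $t=0$.

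\textbf{The curvature term.} At $t=0$ this term is
\[
\int_D\sum_i h\big(R^N(v,d\varphi(e_i))\xi,\,\nabla^\varphi_{e_i}\xi\big)\,v_g ,
\]
writing $\xi$ for $\xi\circ\varphi$. Applying the pair symmetry of the curvature tensor, $h(R(X,Y)Z,W)=h(R(Z,W)X,Y)$, turns it into
\[
h\big(R^N(\xi,\nabla^\varphi_{e_i}\xi)v,\,d\varphi(e_i)\big)=-h\big(v,\,R^N(\xi,\sigma(d\varphi(e_i)))d\varphi(e_i)\big).
\]
The minus sign cancels against the one in \eqref{eq2.1}, producing exactly $\operatorname{Tr}R^N(\xi\circ\varphi,\sigma\circ d\varphi)d\varphi$. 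The main danger is the sign convention for $R^N$, so I will fix $R(X,Y)=[\nabla_X,\nabla_Y]-\nabla_{[X,Y]}$ and check each symmetry step carefully.

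\textbf{The second term.} Here we integrate by parts. Define a vector field $Z$ on $M$ by $g(Z,X)=h(\sigma(v),\nabla^\varphi_X\xi)$; it is compactly supported in $D$, being supported where $v$ is. At the center of the normal frame,
\[
\operatorname{div}Z=\sum_i h\big(\nabla^\varphi_{e_i}\sigma(v),\nabla^\varphi_{e_i}\xi\big)+h\big(\sigma(v),\operatorname{Tr}(\nabla^\varphi)^2\xi\big),
\]
where $\operatorname{Tr}(\nabla^\varphi)^2\xi=\sum_i\big(\nabla^\varphi_{e_i}\nabla^\varphi_{e_i}\xi-\nabla^\varphi_{\nabla_{e_i}e_i}\xi\big)$. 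The divergence theorem makes $\int_D\operatorname{div}Z\,v_g=0$, so this term becomes
\[
-\int_D h\big(\sigma(v),\operatorname{Tr}(\nabla^\varphi)^2\xi\big)\,v_g=-\int_D h\big(v,\sigma^*\operatorname{Tr}(\nabla^\varphi)^2\xi\big)\,v_g ,
\]
with $\sigma^*$ the pointwise $h$-adjoint of the tensor $\sigma$.

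Adding the two contributions yields \eqref{eq2.1} with $\tau_\xi(\varphi)$ as in \eqref{eq2.2}. The only delicate point is the sign and symmetry bookkeeping in the curvature term; the remaining steps are routine.
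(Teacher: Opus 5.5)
Your proposal is correct and follows essentially the same route as the paper. Both use the map $\Phi(x,t)=\varphi_t(x)$, commute $\nabla^\Phi_{\partial_t}$ past $\nabla^\Phi_{e_i}$ via $R^N$, and integrate by parts through the divergence of the $1$-form $X\mapsto h(\sigma(v),\nabla^\varphi_X(\xi\circ\varphi))$ before moving $\sigma$ across with $\sigma^*$. Your explicit symmetry computation $h(R^N(v,d\varphi(e_i))\xi,\sigma(d\varphi(e_i)))=-h(v,R^N(\xi,\sigma(d\varphi(e_i)))d\varphi(e_i))$ fills in a step the paper leaves implicit. It uses the convention $R(X,Y)=[\nabla_X,\nabla_Y]-\nabla_{[X,Y]}$, which is the paper's (cf.\ its constant-curvature formula), and the signs check out.
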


\begin{proof}
Let $\phi$ be a smooth map defined by
\begin{eqnarray*}
   \phi:(-\epsilon,\epsilon)\times M&\rightarrow& N. \\
  (t,x)\quad &\mapsto&\phi(t,x)=\varphi_{t}(x)
\end{eqnarray*}
Let $\{e_{i}\}_{i=1}^m$ be an orthonormal frame with respect to $g$ on $M$ such that $\nabla_{e_{j}}^{M}e_{j}=0$ at $x\in M$ for all $i,j=1,\ldots,m$. Observe that, $\phi(x,0)=\varphi(x)$, and the variation vector field $v$ associated to  $\{\varphi_{t}\}_{t\in(-\epsilon,\epsilon)}$ is given by $v=d\phi(\frac{\partial}{\partial t})|_{t=0}$. We have
\begin{equation}\label{eq2.3}
  \frac{d}{dt}E_{\xi}(\varphi_{t};D)\Big|_{t=0}=\frac{1}{2}\int_{D}\frac{\partial}{\partial t}\big|\nabla^{\varphi_t}(\xi\circ\varphi_t)\big|^2\Big|_{t=0}\,v_{g}.
\end{equation}
We compute the following term
\begin{eqnarray}\label{eq2.4}
 \frac{1}{2}\frac{\partial}{\partial t}\big|\nabla^{\varphi_t}(\xi\circ\varphi_t)\big|^2
 &=&\nonumber\frac{1}{2}\sum_{i=1}^m\frac{\partial}{\partial t} h\big(\nabla^{\varphi_t}_{e_i}(\xi\circ\varphi_t),\nabla^{\varphi_t}_{e_i}(\xi\circ\varphi_t)\big)\\
 &=&\nonumber\frac{1}{2}\sum_{i=1}^m\frac{\partial}{\partial t} h\big(\nabla^\phi_{e_i}(\xi\circ\phi),\nabla^\phi_{e_i}(\xi\circ\phi)\big)\\
 &=&\sum_{i=1}^m h\big(\nabla^\phi_{\frac{\partial}{\partial t}}\nabla^\phi_{e_i}(\xi\circ\phi),\nabla^\phi_{e_i}(\xi\circ\phi)\big).
\end{eqnarray}
Since $[\frac{\partial}{\partial t},e_i]=0$ for all $i=1,\ldots,m$, it follows from the definition of the curvature tensor of $(N,h)$ \cite{ON}, that
\begin{eqnarray}\label{eq2.5}
 \frac{1}{2}\frac{\partial}{\partial t}\big|\nabla^{\varphi_t}(\xi\circ\varphi_t)\big|^2
 &=&\nonumber\sum_{i=1}^m h\big(R^N(d\phi(\frac{\partial}{\partial t}),d\phi(e_i))(\xi\circ\phi),\nabla^\phi_{e_i}(\xi\circ\phi)\big)\\
 & &+\sum_{i=1}^m h\big(\nabla^\phi_{e_i}\nabla^\phi_{\frac{\partial}{\partial t}}(\xi\circ\phi),\nabla^\phi_{e_i}(\xi\circ\phi)\big).
\end{eqnarray}
Define $\omega\in\Gamma(T^*M)$ by $\omega(X)=h\big(\nabla^N_v\xi,\nabla^\varphi_{X}(\xi\circ\varphi)\big)$. Using (\ref{eq2.5}), we get at $x$
\begin{eqnarray}\label{eq2.6}
 \frac{1}{2}\frac{\partial}{\partial t}\big|\nabla^{\varphi_t}(\xi\circ\varphi_t)\big|^2\Big|_{t=0}
 &=&\nonumber\sum_{i=1}^m h\big(R^N(v,d\varphi(e_i))(\xi\circ\varphi),\nabla^\varphi_{e_i}(\xi\circ\varphi)\big)\\
 & &+\operatorname{div}\omega-\sum_{i=1}^m h\big(\nabla^N_{v}\xi,\nabla^\varphi_{e_i}\nabla^\varphi_{e_i}(\xi\circ\varphi)\big).
\end{eqnarray}
By substituting (\ref{eq2.6}) into (\ref{eq2.3}), together with $\sigma(d\varphi(e_i))=\nabla^\varphi_{e_i}(\xi\circ\varphi)$ for
$i=1,\ldots,m$ and $\nabla^N_{v}\xi=\sigma(v)$, and using the definition of the adjoint operator
$\sigma^*$ of $\sigma$, as well as applying the divergence theorem
\cite{baird}, we obtain (\ref{eq2.1}) of Theorem \ref{th2.1}.
\end{proof}

According to Theorem \ref{th2.1}, we deduce the following Corollary.

\begin{corollary}\label{co2.1}
A smooth map $\varphi:(M,g)\to(N,h)$ is generalized $\xi$-parallel if and only if $\tau_{\xi}(\varphi)=0$.
\end{corollary}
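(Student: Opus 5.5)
The corollary follows from the first variation formula (\ref{eq2.1}) of Theorem \ref{th2.1} by the fundamental lemma of the calculus of variations. By definition, $\varphi$ is generalized $\xi$-parallel precisely when $\frac{d}{dt}E_\xi(\varphi_t;D)\big|_{t=0}=0$ for every compact domain $D\subset M$ and every smooth variation $\{\varphi_t\}$ of $\varphi$ supported in $D$. So the task is to show that this vanishing for all such variations is equivalent to $\tau_\xi(\varphi)=0$ pointwise.

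The direction ``$\Leftarrow$'' is immediate. If $\tau_\xi(\varphi)=0$, the right-hand side of (\ref{eq2.1}) vanishes for every $D$ and every variation. Hence $\varphi$ is a critical point of $E_\xi$.

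For ``$\Rightarrow$'' I argue by contradiction. Suppose $\tau_\xi(\varphi)(x_0)\neq 0$ at some $x_0\in M$. I need a variation whose variation field is $v=f\,\tau_\xi(\varphi)$, where $f\ge 0$ is a smooth bump function with compact support in a small coordinate ball $D$ around $x_0$ and $f(x_0)>0$. This construction is the only real step. Given any section $v\in\Gamma(\varphi^{-1}TN)$ with compact support in $D$, set $\varphi_t(x)=\exp^N_{\varphi(x)}\big(t\,v(x)\big)$. This is defined for $|t|$ small, because $v$ has compact support and the exponential map is defined on a neighbourhood of the zero section over the compact set $\varphi(D)$. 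It is smooth in $(t,x)$, equals $\varphi$ outside $D$, and satisfies $\frac{d}{dt}\varphi_t\big|_{t=0}=v$. Also, $\tau_\xi(\varphi)$ is a smooth section of $\varphi^{-1}TN$, since by (\ref{eq2.2}) it is built from $R^N$, $\sigma$, $\sigma^*$ and $d\varphi$. Therefore $v=f\tau_\xi(\varphi)$ is an admissible variation field.

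Inserting this variation into (\ref{eq2.1}) gives
\[
0=\frac{d}{dt}E_{\xi}(\varphi_t;D)\Big|_{t=0}=-\int_D f\,\big|\tau_\xi(\varphi)\big|^2\,v_g .
\]
The integrand is nonnegative and continuous, and it is strictly positive near $x_0$. The integral is therefore negative, which is a contradiction. Hence $\tau_\xi(\varphi)\equiv 0$ on $M$, which proves the corollary.
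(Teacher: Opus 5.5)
Your proposal is correct and follows the same route as the paper, which deduces the corollary directly from the first variation formula (\ref{eq2.1}) of Theorem \ref{th2.1}. You have made explicit the standard fundamental-lemma step that the paper leaves implicit: realizing $v=f\,\tau_\xi(\varphi)$ as the variation field of a compactly supported variation via the exponential map.
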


\begin{remark}
Equation $\tau_{\xi}(\varphi)=0$ is called the generalized $\xi$-parallel equation,
and in local coordinates, it becomes a nonlinear system of PDEs.
We consider $(U,x_i)$ and $(V,y_\alpha)$ two local coordinate charts at $x \in M$ and
$\varphi(x)\in N$ respectively, and denote by ${}^{M}\Gamma^{k}_{ij}$ and
${}^{N}\Gamma^{\alpha}_{\beta\gamma}$ the Christoffel symbols of the
Levi-Civita connection of $M$ and $N$ respectively, then $\varphi$
is generalized $\xi$-parallel  if and only if
\begin{eqnarray*}
   0&=& g^{ij}\big\{ \varphi^\alpha_i \varphi^\beta_j \big( \xi^a \xi^b_\alpha\, {}^NR_{ab\beta}^l+\xi^a\xi^b \,{}^N\Gamma_{\alpha b}^c \, {}^NR_{ac\beta}^l\big)h_{l\theta}+\big[\varphi^\beta_{ij}\big(\xi^l_\beta+\xi^a \,^N\Gamma_{\beta a}^l\big)\\
    & & +\varphi^\alpha_i \varphi^\beta_j \big(\xi^l_\beta+\xi^a \,^N\Gamma_{\beta a}^l\big)_\alpha+\varphi^\alpha_i \varphi^\beta_j \big(\xi^c_\beta+\xi^a \,^N\Gamma_{\beta a}^c\big) {}^N\Gamma_{\alpha c}^l\\
    & &
        -{}^M\Gamma_{ij}^k\varphi^\alpha_k\big(\xi^l_\alpha+\xi^a \,^N\Gamma_{\alpha a}^l\big)\big]\big(\xi^t_\theta+\xi^s \,{}^N\Gamma_{\theta s}^t\big) h_{tl}\big\}\circ\varphi,
\end{eqnarray*}
for all $\theta=1,\ldots,n$, where $\varphi^\alpha_i=\partial \varphi^\alpha/\partial x_{i}$, and
$\xi^{b}_{a}=\partial \xi^{b}/\partial y^{a}$,
Here we use the Einstein summation convention for
$i,j,k=1,\ldots,m$ and
$a,b,c,l,s,t,\alpha,\beta=1,\ldots,n$,
where $m=\dim M$ and $n=\dim N$.
\end{remark}

Using (\ref{eq2.2}) of Theorem \ref{th2.1}, and Corollary \ref{co2.1}, we obtain.

\begin{corollary}\label{co2.2}
Every $\xi$-parallel map $\varphi:(M,g)\to(N,h)$ is a generalized $\xi$-parallel map.
\end{corollary}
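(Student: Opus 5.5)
The plan is to show that $\tau_{\xi}(\varphi)=0$ whenever $\varphi$ is $\xi$-parallel, and then conclude by Corollary \ref{co2.1}. The hypothesis says $\nabla^\varphi_X(\xi\circ\varphi)=0$ for every $X\in\Gamma(TM)$. I will show that each of the two terms in (\ref{eq2.2}) vanishes separately.

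\emph{Curvature term.} Fix a local orthonormal frame $\{e_i\}_{i=1}^m$ on $(M,g)$. As noted in the proof of Theorem \ref{th2.1}, $\sigma(d\varphi(e_i))=\nabla^\varphi_{e_i}(\xi\circ\varphi)$. By hypothesis this is $0$ for every $i$. The trace term is
\[
\operatorname{Tr} R^{N}\big(\xi\circ\varphi,\sigma\circ d\varphi\big)d\varphi
=\sum_{i=1}^m R^N\big(\xi\circ\varphi,\sigma(d\varphi(e_i))\big)d\varphi(e_i).
\]
Since $R^N$ is tensorial in each slot and the second slot is zero, every summand vanishes.

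\emph{Second-order term.} Next I expand
\[
\operatorname{Tr}(\nabla^\varphi)^2(\xi\circ\varphi)=\sum_{i=1}^m\Big(\nabla^\varphi_{e_i}\nabla^\varphi_{e_i}(\xi\circ\varphi)-\nabla^\varphi_{\nabla^M_{e_i}e_i}(\xi\circ\varphi)\Big).
\]
The first-order derivative $\nabla^\varphi_{Y}(\xi\circ\varphi)$ vanishes identically for every vector field $Y$. In particular $\nabla^\varphi_{e_i}(\xi\circ\varphi)$ is the zero section, so differentiating it again also gives zero, and the term with $Y=\nabla^M_{e_i}e_i$ vanishes as well. Hence the rough Laplacian of $\xi\circ\varphi$ is zero. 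Because $\sigma^*$ is a linear (pointwise tensorial) operator, $\sigma^*$ applied to it is also zero.

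Both terms vanish, so $\tau_\xi(\varphi)=0$, and Corollary \ref{co2.1} gives that $\varphi$ is generalized $\xi$-parallel. I expect no real obstacle. The only point needing care is that the parallel condition must hold identically on an open set, not just at one point, so that its derivatives also vanish; this is exactly what the definition in (\ref{eq1.1}) provides. One can also see the conclusion directly: $E_\xi\ge 0$ and $E_\xi(\varphi;D)=0$, so $\varphi$ is a minimum and hence a critical point. This gives a consistency check on the computation.
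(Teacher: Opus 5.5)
Your proposal is correct and follows the paper's argument. The paper also substitutes into (\ref{eq2.2}), notes that both terms vanish because $\sigma(d\varphi(e_i))=\nabla^\varphi_{e_i}(\xi\circ\varphi)\equiv 0$, and then invokes Corollary \ref{co2.1}. Your closing observation, that $\varphi$ attains the minimum value $0$ of the nonnegative functional $E_\xi$, is also a valid and even more direct proof.
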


\begin{remark}
Let $\varphi:(M,g)\to\mathbb{R}^n$ be a smooth map and $\xi\in\Gamma(T\mathbb{R}^n)$ such that
\[
\nabla^\varphi_X\nabla^\varphi_X(\xi\circ\varphi)
-\nabla^\varphi_{\nabla^M_X X}(\xi\circ\varphi)=0,
\]
for every $X\in\Gamma(TM)$. Then, $\varphi$ is generalized $\xi$-parallel map.
\end{remark}

The following theorem establishes a direct relationship between generalized $\xi$-parallel maps with respect to a concurrent vector field $\xi$ and harmonic maps \cite{ES}. It shows that, when the target manifold admits a concurrent vector field \cite{F,YN2}, these two notions coincide.

\begin{theorem}\label{th-concurrent}
Let $(M,g)$ be a Riemannian manifold and let $(N,h)$ be a Riemannian manifold admitting a concurrent vector field $\xi$, that is,
\[
\nabla^{N}_{X}\xi=X,\quad \forall\,X\in\Gamma(TN).
\]
Then, a smooth map $\varphi:(M,g)\to(N,h)$ is generalized $\xi$-parallel if and only if $\varphi$ is harmonic.
\end{theorem}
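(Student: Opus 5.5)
We plan to compute the $\xi$-tension field $\tau_\xi(\varphi)$ from (\ref{eq2.2}) of Theorem \ref{th2.1} explicitly under the hypothesis $\nabla^N_X\xi=X$. Then we invoke Corollary \ref{co2.1}. Concretely, the concurrent condition says that $\sigma=\operatorname{Id}$, and hence $\sigma^*=\operatorname{Id}$. Moreover, for every $X\in\Gamma(TM)$,
\[
\nabla^\varphi_X(\xi\circ\varphi)=\nabla^N_{d\varphi(X)}\xi=d\varphi(X).
\]
Thus $\nabla^\varphi(\xi\circ\varphi)=d\varphi$. Since $|d\varphi|^2$ is the energy density, we already see that $E_\xi(\varphi;D)$ coincides with the classical energy $E(\varphi;D)$. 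One could conclude directly from this observation, since the two functionals have the same critical points. I will nevertheless also verify the statement at the level of the Euler--Lagrange equation, to stay consistent with the formula of Theorem \ref{th2.1}.

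For the second term of (\ref{eq2.2}), take a local orthonormal frame $\{e_i\}$ on $M$. Then
\[
\operatorname{Tr}(\nabla^\varphi)^2(\xi\circ\varphi)=\sum_i\big(\nabla^\varphi_{e_i}\nabla^\varphi_{e_i}(\xi\circ\varphi)-\nabla^\varphi_{\nabla^M_{e_i}e_i}(\xi\circ\varphi)\big)=\sum_i\big(\nabla^\varphi_{e_i}d\varphi(e_i)-d\varphi(\nabla^M_{e_i}e_i)\big)=\tau(\varphi),
\]
which is the usual tension field. Applying $\sigma^*=\operatorname{Id}$ leaves it unchanged.

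For the curvature term, we get
\[
\operatorname{Tr}R^N(\xi\circ\varphi,\sigma\circ d\varphi)d\varphi=\sum_i R^N\big(\xi,d\varphi(e_i)\big)d\varphi(e_i).
\]
The key step is to show that this vanishes. A concurrent field forces strong curvature identities: for all $X,Y$,
\[
R^N(X,Y)\xi=\nabla_X\nabla_Y\xi-\nabla_Y\nabla_X\xi-\nabla_{[X,Y]}\xi=\nabla_XY-\nabla_YX-[X,Y]=0.
\]
By the pair symmetry of the Riemann tensor,
\[
h\big(R(\xi,Y)Y,Z\big)=h\big(R(Y,Z)\xi,Y\big)=0,
\]
so $R^N(\xi,Y)Y=0$ for every $Y$. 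Hence the trace term vanishes identically. (One must check the sign conventions of the curvature used in Theorem \ref{th2.1}, but the vanishing is convention-independent, since all orderings reduce to $R(\cdot,\cdot)\xi=0$.)

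Combining the two computations gives $\tau_\xi(\varphi)=\tau(\varphi)$. By Corollary \ref{co2.1}, $\varphi$ is then generalized $\xi$-parallel if and only if $\tau(\varphi)=0$, that is, if and only if $\varphi$ is harmonic. The only potential obstacle is the curvature term. It is handled by the identity $R^N(\cdot,\cdot)\xi=0$ together with the symmetries of $R^N$, and the rest of the argument is direct substitution.
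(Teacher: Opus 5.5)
Your argument is correct and complete. The paper states Theorem \ref{th-concurrent} without proof, so there is no competing route to compare against. Your computation of $\tau_\xi(\varphi)$ from (\ref{eq2.2}) is exactly what the formula calls for: $\sigma=\operatorname{Id}$, $\operatorname{Tr}(\nabla^\varphi)^2(\xi\circ\varphi)=\tau(\varphi)$, and $R^N(X,Y)\xi=0$, which is the case $f\equiv 1$ of the identity $R^N(X,Y)\xi=X(f)Y-Y(f)X$ used in the proof of Theorem \ref{th-submanifold}. Together these give $\tau_\xi(\varphi)=\tau(\varphi)$.

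Your preliminary observation is already a full proof on its own. Since $\nabla^\psi(\xi\circ\psi)=d\psi$ for every smooth map $\psi$, one has $E_\xi(\varphi_t;D)=E(\varphi_t;D)$ along every variation, so the two functionals have the same critical points. This is the same mechanism behind the paper's identification of generalized $\xi$-parallel maps with $f^2$-harmonic maps in Remark \ref{remark}.
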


From Theorem \ref{th-concurrent}, we obtain the following result.

\begin{corollary}
Every harmonic map $\varphi:(M,g)\to\mathbb{R}^n$ is a generalized $\xi$-parallel map with respect to the position vector field $\xi$.
\end{corollary}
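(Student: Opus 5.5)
The statement to prove is the last corollary: every harmonic map $\varphi:(M,g)\to\mathbb{R}^n$ is generalized $\xi$-parallel when $\xi$ is the position vector field. The plan is to apply Theorem \ref{th-concurrent} with $(N,h)=(\mathbb{R}^n,\langle\cdot,\cdot\rangle)$. The only thing to check is that the position vector field
\[
\xi(y)=\sum_{\alpha=1}^n y^\alpha\frac{\partial}{\partial y^\alpha}
\]
is concurrent, i.e. $\nabla^{N}_X\xi=X$ for all $X\in\Gamma(T\mathbb{R}^n)$. In the standard flat chart the Christoffel symbols vanish. Writing $X=X^\beta\partial_\beta$, we get
\[
\nabla_X\xi=X^\beta\,\frac{\partial y^\alpha}{\partial y^\beta}\,\partial_\alpha=X^\beta\delta^\alpha_\beta\,\partial_\alpha=X .
\]
Moreover $\xi$ is not identically zero, so it qualifies as a nonzero vector field in the definition of generalized $\xi$-parallel maps.

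With this, Theorem \ref{th-concurrent} says $\varphi$ is generalized $\xi$-parallel if and only if it is harmonic, and the corollary is one direction of that equivalence.

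As a sanity check, one can also verify it directly from (\ref{eq2.2}). Since $R^{\mathbb{R}^n}=0$, the curvature term drops out. Since $\sigma=\mathrm{Id}$, we have $\sigma^*=\mathrm{Id}$ and $\nabla^\varphi_{X}(\xi\circ\varphi)=\sigma(d\varphi(X))=d\varphi(X)$. Hence
\[
\operatorname{Tr}(\nabla^\varphi)^2(\xi\circ\varphi)=\operatorname{Tr}\nabla d\varphi=\tau(\varphi),
\]
so $\tau_\xi(\varphi)=\tau(\varphi)$, which vanishes by harmonicity. There is no real obstacle here. The only point needing care is noting that the position field has a zero at the origin; this is allowed, since the definition only requires $\xi$ to be nonzero as a vector field, not nowhere vanishing.
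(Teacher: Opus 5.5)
Your proposal is correct and is essentially the paper's argument: the paper obtains this corollary directly from Theorem \ref{th-concurrent}, using that the position vector field on $\mathbb{R}^n$ is concurrent, which is exactly the fact you verify. Your additional direct check via (\ref{eq2.2}), showing $\tau_\xi(\varphi)=\tau(\varphi)$, is a valid confirmation but is not needed for the proof.
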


This corollary shows that there exist proper generalized $\xi$-parallel maps (i.e., generalized $\xi$-parallel non $\xi$-parallel maps) into Euclidean space.

\begin{remark}\label{remark}
Let $(M,g)$ be a Riemannian manifold and let $(N,h)$ be a Riemannian manifold admitting a concircular vector field $\xi$ with concircular function $f$ on $N$, that is,
\begin{equation}\label{eq-concircular}
\nabla^{N}_{X}\xi=fX,\quad \forall\,X\in\Gamma(TN),
\end{equation}
for some smooth function $f$ on $N$ \cite{F,KR,YN,YN2}.
\begin{enumerate}
\item If $f>0$. Then, every generalized $\xi$-parallel map from $(M,g)$ to $(N,h)$ is an $f^2$-harmonic map \cite{DMZO}. Moreover, a smooth map $\varphi:(M,g)\to(N,h)$ is generalized $\xi$-parallel if and only if it satisfies
\[
(f\circ\varphi)^2 \tau(\varphi)
+ d\varphi\big(\operatorname{grad}^M (f\circ\varphi)^2\big)
-\frac{1}{2}|d\varphi|^2\,(\operatorname{grad}^N f^2)\circ\varphi
=0,
\]
where $\tau(\varphi)$ denotes the tension field of $\varphi$ \cite{ES}. In particular, generalized $\xi$-parallel maps generalize both harmonic maps and $f^2$-harmonic maps.
\item If $f\neq0$. A smooth map $\varphi$ from $(M,g)$ to $(N,h)$ is generalized $\xi$-parallel if and only if
\begin{equation*}
\operatorname{Tr} R^{N}\big(\xi\circ\varphi, d\varphi\big)d\varphi
+ \operatorname{Tr}\, (\nabla^\varphi)^2 (\xi\circ\varphi)=0.
\end{equation*}
 Moreover, if $\tau(\varphi)=C\,(\xi\circ\varphi)$ for some nonzero constant $C$, then the map $\varphi:(M,g)\to(N,h)$ is generalized $\xi$-parallel if and only if it is a biharmonic map \cite{Jiang}. Therefore, the notion of generalized $\xi$-parallel maps provides a natural generalization of biharmonic maps.
\end{enumerate}
\end{remark}

\section{Generalized $\xi$-parallel curves in space forms}

Let $(N^3(c),h)$ be a three-dimensional Riemannian manifold of constant curvature $c$, and let
$\gamma: I \longrightarrow (N^3(c),h)$ be a curve parametrized by arc length, where
$I \subseteq \mathbb{R}$ is an open interval. Consider an orthonormal frame field
$\{T, N, B\}$ along $\gamma$, with $T = d\gamma(\frac{d}{dt})$ the unit tangent vector,
$N$ the unit normal vector in the direction of $\nabla_T T$, and $B$ chosen so that
$\{T, N, B\}$ forms a positively oriented basis. Denote by $\nabla$ the Levi-Civita
connection of $(N^3(c),h)$. Then, the Frenet equations along $\gamma$ are given by
\[
\left\{
\begin{aligned}
\nabla_T T &= k N,\\
\nabla_T N &= -k T + \tau B,\\
\nabla_T B &= -\tau N,
\end{aligned}
\right.
\]
where $k$ and $\tau$ are the geodesic curvature and geodesic torsion of $\gamma$, respectively.
The following theorem provides a characterization of generalized $\xi$-parallel curves in space forms with respect to the vector field $\xi$ on $N^3(c)$, which is written along $\gamma$ as $\xi=\xi_1 T+\xi_2 N+\xi_3 B.$
We denote by $\sigma_{ij}$ the components of $\sigma$ with respect to the frame $\{T,N,B\}$, that is,
$\sigma_{ij}=h( X_i,\sigma(X_j)),$ where $X_1=T$, $X_2=N$, and $X_3=B$.

\begin{theorem}\label{th5}
A curve $\gamma:I\longrightarrow (N^3(c),h)$ is generalized $\xi$-parallel if and only if
\[
\left\{
\begin{array}{l}
\displaystyle
(\xi_1'-k\xi_2)^2+(\xi_2'+k\xi_1-\tau\xi_3)^2+(\xi_3'+\tau\xi_2)^2=C^{st},\\[2ex]
\displaystyle
c\,(\xi_1'\xi_2-k\xi_2^2-\xi_1\xi_2'-k\xi_1^2+\tau\xi_1\xi_3)+(\alpha'-k\beta)\sigma_{12}\\[1.5ex]
\displaystyle
+(k\alpha+\beta'-\tau\delta)\sigma_{22}+(\tau\beta+\delta')\sigma_{32}=0,\\[2ex]
\displaystyle
c\,(\xi_1'\xi_3-k\xi_2\xi_3-\xi_1\xi_3'-\tau\xi_1\xi_2)+(\alpha'-k\beta)\sigma_{13}\\[1.5ex]
\displaystyle
+(k\alpha+\beta'-\tau\delta)\sigma_{23}+(\tau\beta+\delta')\sigma_{33}=0.
\end{array}
\right.
\]
\end{theorem}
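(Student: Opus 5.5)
By Corollary \ref{co2.1}, $\gamma$ is generalized $\xi$-parallel if and only if $\tau_\xi(\gamma)=0$. I would therefore compute $\tau_\xi(\gamma)$ from (\ref{eq2.2}) in the case $m=1$, using the frame $\{T,N,B\}$, and read off its three components. Since $\gamma$ is parametrized by arc length, the unit field $e_1=\frac{d}{dt}$ satisfies $\nabla_{e_1}e_1=0$, so the traces reduce to a single term. Writing $\xi\circ\gamma=\xi_1T+\xi_2N+\xi_3B$ and using the Frenet equations, the first covariant derivative is
\[
\sigma(T)=\nabla_T\xi=\alpha T+\beta N+\delta B,\qquad \alpha=\xi_1'-k\xi_2,\ \ \beta=\xi_2'+k\xi_1-\tau\xi_3,\ \ \delta=\xi_3'+\tau\xi_2 .
\]
Differentiating once more with Frenet gives
\[
\nabla_T\nabla_T\xi=(\alpha'-k\beta)T+(k\alpha+\beta'-\tau\delta)N+(\tau\beta+\delta')B=:W_1T+W_2N+W_3B.
\]

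For the curvature term, I use the constant curvature formula $R^N(X,Y)Z=c\big(h(Y,Z)X-h(X,Z)Y\big)$, adopting the sign convention of the paper's first variation formula. This gives
\[
R^N(\xi,\sigma(T))T=c\big(\alpha\,\xi-\xi_1\,\sigma(T)\big).
\]
Its $T$-component is $c(\alpha\xi_1-\xi_1\alpha)=0$. Its $N$- and $B$-components are $c(\alpha\xi_2-\xi_1\beta)$ and $c(\alpha\xi_3-\xi_1\delta)$. Expanding these reproduces exactly the $c$-terms in the second and third equations of the statement.

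For the adjoint term, $h(X_j,\sigma^*W)=h(\sigma X_j,W)=\sum_i\sigma_{ij}W_i$. The $N$- and $B$-components of $\sigma^*(\nabla_T\nabla_T\xi)$ are therefore $\sum_i\sigma_{i2}W_i$ and $\sum_i\sigma_{i3}W_i$. Added to the curvature components, these give the second and third equations.

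For the $T$-component, the curvature part vanishes. Since $\sigma_{i1}=h(X_i,\nabla_T\xi)$, we have $(\sigma_{11},\sigma_{21},\sigma_{31})=(\alpha,\beta,\delta)$. Hence the $T$-component equals
\[
\alpha W_1+\beta W_2+\delta W_3=h(\nabla_T\xi,\nabla_T\nabla_T\xi)=\tfrac12\tfrac{d}{dt}\big(\alpha^2+\beta^2+\delta^2\big).
\]
On the interval $I$ this vanishes if and only if $\alpha^2+\beta^2+\delta^2$ is constant, which is the first equation.

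The main obstacle is bookkeeping rather than ideas. The sign convention for $R^N$ must match the one used in Theorem \ref{th2.1}, and the index order in $\sigma_{ij}$ versus the adjoint must be handled carefully. I would check the signs of the $c$-terms against the stated formula first, and recognise the $T$-component as an exact derivative.
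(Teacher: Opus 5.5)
Your proposal is correct and follows essentially the same route as the paper: you write $\tau_\xi(\gamma)=R^N(\xi\circ\gamma,\sigma(T))T+\sigma^{*}\big(\nabla^\gamma_{\frac{d}{dt}}\nabla^\gamma_{\frac{d}{dt}}(\xi\circ\gamma)\big)$, expand it with the constant-curvature formula and the Frenet equations, and read off the $T$, $N$, $B$ components, where the $T$-component is $\alpha\alpha'+\beta\beta'+\delta\delta'=\frac{1}{2}(\alpha^2+\beta^2+\delta^2)'$, exactly as in the paper. Two details that the paper leaves implicit are made explicit in your version: that the curvature term has no $T$-component, and that $I$ being an interval turns the vanishing derivative into the first equation.
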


\begin{proof}
Using $T=d\gamma(\frac{d}{dt})$, the $\xi$-tension field of $\gamma$ is given by
\begin{equation}\label{eq5.2}
\tau_{\xi}(\gamma)
= R^{N}\big(\xi\circ\gamma,\sigma(T)\big)T
+ \sigma^{*}\big(\nabla^\gamma_{\frac{d}{dt}} \nabla^\gamma_{\frac{d}{dt}}(\xi\circ\gamma)\big),
\end{equation}
where $R^N$ denotes the Riemannian curvature tensor of $(N^3(c),h)$.
We compute the first term of (\ref{eq5.2}). As $R^{N}(X,Y)Z=c\,h( Y,Z) X-c\,h( X,Z) Y$, we obtain
\begin{eqnarray}\label{eq5.3}
R^{N}\big(\xi\circ\gamma,\sigma(T)\big)T
&=&\nonumber c\,h(\sigma(T),T)(\xi\circ\gamma)-c\,h(\xi\circ\gamma,T)\sigma(T)\\
&=& c\,h(\nabla^N_T\xi,T)(\xi\circ\gamma)-c\,h(\xi\circ\gamma,T)\nabla^N_T\xi.
\end{eqnarray}
Take $\xi=\xi_1 T+\xi_2 N+\xi_3 B$. Substituting this into (\ref{eq5.3}) and using the Frenet equations, we find that
\begin{eqnarray}\label{eq5.4}
R^{N}\big(\xi\circ\gamma,\sigma(T)\big)T
&=&\nonumber c\,(\xi_1'-k\xi_2)(\xi_1 T+\xi_2 N+\xi_3 B)
    -c\,\xi_1\big(\xi_1'T+\xi_2'N\\
& &\nonumber+\xi_3'B+k\xi_1N-k\xi_2 T+\tau\xi_2B-\tau \xi_3 N\big)\\
&=&\nonumber c\big(\xi_1'\xi_2-k\xi_2^2-\xi_1\xi_2'-k\xi_1^2+\tau\xi_1\xi_3\big)N\\
& & +c\big(\xi_1'\xi_3-k\xi_2\xi_3-\xi_1\xi_3'-\tau\xi_1\xi_2\big)B.
\end{eqnarray}
Now, we compute the second term in (\ref{eq5.2}). Applying the Frenet equations, we obtain the following
\begin{equation}\label{eq5.5}
\sigma(T)=\nabla^\gamma_{\frac{d}{dt}}(\xi\circ\gamma)=\alpha T+\beta N+\delta B,
\end{equation}
where $\alpha=\xi_1'-k\xi_2$, $\beta=\xi_2'+k\xi_1-\tau\xi_3$, and $\delta=\xi_3'+\tau\xi_2$. It follows that
\begin{eqnarray}\label{eq5.6}
 \nabla^\gamma_{\frac{d}{dt}} \nabla^\gamma_{\frac{d}{dt}}(\xi\circ\gamma)
 &=& (\alpha'-k\beta)T+(k\alpha+\beta'-\tau\delta)N+(\tau\beta+\delta')B.
\end{eqnarray}
Consequently, using (\ref{eq5.5}) and (\ref{eq5.6}), together with the relation
$\sigma_{ij}=h( X_i,\sigma(X_j)),$ where $X_1=T$, $X_2=N$, and $X_3=B$, we obtain
\begin{eqnarray*}
 h\big(\sigma^*\big(\nabla^\gamma_{\frac{d}{dt}} \nabla^\gamma_{\frac{d}{dt}}(\xi\circ\gamma)\big),T\big)
 &=& \alpha\alpha'+\beta\beta'+\delta\delta',\\
  h\big(\sigma^*\big(\nabla^\gamma_{\frac{d}{dt}} \nabla^\gamma_{\frac{d}{dt}}(\xi\circ\gamma)\big),N\big)
 &=& (\alpha'-k\beta)\sigma_{12}+(k\alpha+\beta'-\tau\delta)\sigma_{22}+(\tau\beta+\delta')\sigma_{32},\\
  h\big(\sigma^*\big(\nabla^\gamma_{\frac{d}{dt}} \nabla^\gamma_{\frac{d}{dt}}(\xi\circ\gamma)\big),B\big)
 &=& (\alpha'-k\beta)\sigma_{13}+(k\alpha+\beta'-\tau\delta)\sigma_{23}+(\tau\beta+\delta')\sigma_{33}.
\end{eqnarray*}
Substituting these formulas and (\ref{eq5.4}) into (\ref{eq5.2}), we deduce the system stated in Theorem \ref{th5}.
\end{proof}

\begin{remark}
The first equation of the system in Theorem \ref{th5} shows that
$$\sigma\big(d\gamma(\frac{d}{dt})\big)=\nabla^\gamma_{\frac{d}{dt}}(\xi\circ\gamma),$$ has a constant norm along the curve $\gamma$.
\end{remark}

In the particular case where $N^3(c)$ is the Euclidean $3$-space $(c=0)$, Theorem \ref{th5} yields the following example.

\begin{example}
We consider the helix $\gamma(t)=(\cos t,\sin t,t)$ in $\mathbb{R}^3$. Note that the Frenet frame is given by
\begin{eqnarray*}
  T &=& \frac{1}{\sqrt{2}}(-\sin t,\cos t,1); \\
  N &=& (-\cos t,-\sin t,0); \\
  B &=& \frac{1}{\sqrt{2}}(\sin t,-\cos t,1).
\end{eqnarray*}
Let $\xi=f(z)\,\partial_z$ be a vector field on $\mathbb{R}^3$, where $f$ is a smooth real-valued function of $z$.
Observe that, along the curve $\gamma$, we have $\xi\circ\gamma=\frac{f}{\sqrt{2}}(T+B)$. Therefore, $\xi_1=\frac{f}{\sqrt{2}}$, $\xi_2=0$, and $\xi_3=\frac{f}{\sqrt{2}}$. Moreover,
\begin{eqnarray*}
  \sigma(T)=\sigma(B)= \frac{f'}{2}( T+ B),\quad \sigma(N) = 0.
\end{eqnarray*}
Since the helix $\gamma$ has constant curvature and torsion $k=\tau=\frac{1}{2}$, Theorem \ref{th5} implies that $\gamma$ is proper generalized $\xi$-parallel if and only if $f(z)=\alpha z+\beta$, where $\alpha,\beta\in\mathbb{R}$ and $\alpha\neq 0$.
\end{example}

\section{Generalized $\xi$-parallel submanifolds in space forms admitting concircular vector fields}

Let $M$ be a submanifold of a Riemannian manifold $(N,h)$, and let
$\mathbf{i}:(M,g)\hookrightarrow (N,h)$ denote the canonical inclusion, where $g=\mathbf{i}^{*}h$ is the induced Riemannian metric on $M$ \cite{baird,ON}.
A submanifold $(M,g)$ is said to be generalized $\xi$-parallel, with respect to a vector field $\xi\in\Gamma(TN)$, if
$\tau_{\xi}(\mathbf{i})=0.$ If, in addition, $\tau(\mathbf{i})\neq 0$, then $(M,g)$ is called a proper generalized $\xi$-parallel submanifold.\\
Throughout the paper, $\nabla^{N}$ and $\nabla^{M}$ denote the Levi--Civita connections on $(N,h)$ and $(M,g)$, respectively, while $\operatorname{grad}^{N}$ and $\operatorname{grad}^{M}$ denote the corresponding gradient operators. Moreover, $B$ denotes the second fundamental form, and $H=(1/m)\operatorname{Tr}B$ the mean curvature vector field of $(M,g)$. With this notation, we establish the following results.

\begin{theorem}\label{th-submanifold}
Let $(M,g)$ be an $m$-dimensional submanifold of a space form $(N(c),h)$ admitting a concircular vector field $\xi$ with nonzero concircular function $f$. Then, $(M,g)$ is a generalized $\xi$-parallel submanifold if and only if
\[
\left\{
\begin{array}{l}
\displaystyle
(m-2)\operatorname{grad}^{M} f = 0,\\[2ex]
\displaystyle
fH -(\operatorname{grad}^{N} f)^\bot = 0.
\end{array}
\right.
\]
Moreover, if $m\neq2$, then $(M,g)$ is generalized $\xi$-parallel submanifold if and only if the function $f$ is constant on $M$, and
$fH=\operatorname{grad}^{N} f$ along $M$.
\end{theorem}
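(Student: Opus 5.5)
The plan is to compute $\tau_{\xi}(\mathbf{i})$ directly from (\ref{eq2.2}) of Theorem \ref{th2.1} and then split it into tangential and normal parts. Since $\xi$ is concircular, $\sigma=f\,\mathrm{Id}$. Hence $\sigma$ is self-adjoint, $\sigma^*=f$, and $\sigma\circ d\mathbf{i}=f\,d\mathbf{i}$. Formula (\ref{eq2.2}) then becomes
\[
\tau_\xi(\mathbf{i})=f\Big(\operatorname{Tr}R^N(\xi,d\mathbf{i})d\mathbf{i}+\operatorname{Tr}(\nabla^{\mathbf{i}})^2\xi\Big).
\]
This is exactly item 2 of Remark \ref{remark}. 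Because $f\neq 0$, the submanifold is generalized $\xi$-parallel iff the bracket vanishes.

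For the second-order term, take a local orthonormal frame $\{e_i\}$ on $M$. Using $\nabla^{\mathbf{i}}_{X}\xi=fX$ and the Gauss formula $\nabla^N_{e_i}e_i=\nabla^M_{e_i}e_i+B(e_i,e_i)$, we get
\[
\nabla^{\mathbf{i}}_{e_i}\nabla^{\mathbf{i}}_{e_i}\xi-\nabla^{\mathbf{i}}_{\nabla^M_{e_i}e_i}\xi
=e_i(f)e_i+f\big(\nabla^M_{e_i}e_i+B(e_i,e_i)\big)-f\nabla^M_{e_i}e_i .
\]
Summing gives $\operatorname{Tr}(\nabla^{\mathbf{i}})^2\xi=\operatorname{grad}^M f+mfH$.

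For the curvature term, use $R^N(X,Y)Z=c\,h(Y,Z)X-c\,h(X,Z)Y$. This gives
\[
\sum_i R^N(\xi,e_i)e_i=c\,(m\xi-\xi^{\top})=c\big((m-1)\xi^{\top}+m\xi^{\perp}\big).
\]

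The key step, and the one I expect to be the main obstacle, is to express $c\,\xi$ through $f$, namely to show $\operatorname{grad}^N f=-c\,\xi$. I will compute $R^N(X,Y)\xi$ from the definition together with $\nabla^N\xi=f\,\mathrm{Id}$, which gives $R^N(X,Y)\xi=X(f)Y-Y(f)X$. I will then compare this with the space-form expression $c\,h(Y,\xi)X-c\,h(X,\xi)Y$. Choosing $X\perp Y$ with $\dim N\geq 2$ and reading off the coefficient of $Y$ yields $X(f)=-c\,h(X,\xi)$ for all $X$. In particular, when $c=0$ the function $f$ is constant, which is consistent with the formula. Care is needed here to match the sign convention of $R^N$ used in the proof of Theorem \ref{th5}.

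Substituting $c\,\xi=-\operatorname{grad}^N f$, whose tangential part is $\operatorname{grad}^M f$, the bracket becomes
\[
-(m-1)\operatorname{grad}^M f-m(\operatorname{grad}^N f)^{\perp}+\operatorname{grad}^M f+mfH
=-(m-2)\operatorname{grad}^M f+m\big(fH-(\operatorname{grad}^N f)^{\perp}\big).
\]
Separating tangential and normal components gives the stated system. If $m\neq 2$, the first equation forces $\operatorname{grad}^M f=0$, i.e.\ $f$ is constant on $M$. Then $(\operatorname{grad}^N f)^{\top}=\operatorname{grad}^M f=0$, so $\operatorname{grad}^N f=(\operatorname{grad}^N f)^{\perp}=fH$ along $M$. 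The converse is immediate from the same decomposition.
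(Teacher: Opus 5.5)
Your proposal is correct and follows essentially the same route as the paper. Both arguments compute the curvature trace as $c\big((m-1)\xi^{\top}+m\xi^{\perp}\big)$ and the second-order trace as $\operatorname{grad}^M f+mfH$ (up to the common factor $f$), then split into tangential and normal parts using $\operatorname{grad}^N f=-c\,\xi$, which you obtain the same way the paper does, by comparing the two expressions for $R^N(X,Y)\xi$. The only differences are cosmetic: you factor out $f$ at the start and use the frame-independent trace rather than a geodesic frame at a point, and you spell out the $m\neq 2$ consequence and its converse, which the paper leaves implicit.
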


\begin{proof}
Fix a point $x\in M$, and let $\{e_{i}\}_{i=1}^m$ be an orthonormal frame with respect to $g$ on $M$ such that $\nabla_{e_{i}}^{M}e_{j}=0$ at $x\in M$ for all $i,j=1, ...,m$. At $x$, the inclusion $\mathbf{i}:(M,g)\hookrightarrow (N,h)$ is generalized $\xi$-parallel if and only if
\begin{eqnarray}\label{eq4.1}
\sum_{i=1}^m \big[R^{N}\big(\xi\circ\mathbf{i},\sigma(d\mathbf{i}(e_i))\big)d\mathbf{i}(e_i)
+  \sigma^{*}\big(\nabla^\mathbf{i}_{e_i}\nabla^\mathbf{i}_{e_i}(\xi\circ\mathbf{i})\big)\big]=0.
\end{eqnarray}
Since the sectional curvature of $(N,h)$ is a constant $c$,
the first term on the left-hand side of (\ref{eq4.1}) is given at $x$ by
\begin{eqnarray}\label{eq4.2}
\sum_{i=1}^m R^{N}\big(\xi\circ\mathbf{i},\sigma(d\mathbf{i}(e_i))\big)d\mathbf{i}(e_i)
&=&\nonumber c\sum_{i=1}^m \big[h( \sigma(e_i),e_i) \xi-h( \xi,e_i) \sigma(e_i)\big]\\
&=&\nonumber cf\sum_{i=1}^m \big[h( e_i,e_i) \xi-h( \xi,e_i) e_i\big]\\
&=&\nonumber cf \big[m\,\xi-\xi^\top\big]\\
&=& cf \big[(m-1)\xi^\top+m\,\xi^\bot\big].
\end{eqnarray}
We compute the second term on the left-hand side of (\ref{eq4.1}), at $x$ we have
\begin{eqnarray}\label{eq4.3}
\sum_{i=1}^m  \sigma^{*}\big(\nabla^\mathbf{i}_{e_i}\nabla^\mathbf{i}_{e_i}(\xi\circ\mathbf{i})\big)
&=&\nonumber f \sum_{i=1}^m \nabla^N_{e_i}\nabla^N_{e_i}\xi\\
&=&\nonumber f \sum_{i=1}^m \nabla^N_{e_i}(f e_i)\\
&=&\nonumber f \sum_{i=1}^m \big[e_i(f) e_i+f\nabla^N_{e_i} e_i\big]\\
&=&\nonumber f \operatorname{grad}^M f+f^2\sum_{i=1}^m B(e_i,e_i)\\
&=& f \operatorname{grad}^M f+mf^2H.
\end{eqnarray}
Note that, by using the properties $R^{N}(X,Y)\xi=c\,h( Y,\xi) X-c\,h( X,\xi) Y$
and $R^{N}(X,Y)\xi=X(f)Y-Y(f)X$, where $X,Y\in\Gamma(TN)$, we conclude that $\operatorname{grad}^{N} f=-c\,\xi$.
The Theorem \ref{th-submanifold} follows from (\ref{eq4.2}) and (\ref{eq4.3}).
\end{proof}

\begin{remark}
According to Theorem \ref{th-concurrent}, every generalized $\xi$-parallel submanifold with respect to the position vector field in $\mathbb{R}^n$ is minimal.
\end{remark}

Recall that a submanifold is said to have constant mean curvature (CMC) if the norm of its mean curvature vector field $H$ is constant, i.e., $|H|$ is constant. A submanifold $(M,g)$ of $(N,h)$ is called pseudo-umbilical if $A_H = |H|^2 I,$
where $I$ denotes the identity $(1,1)$-tensor field on $M$. According to Theorem \ref{th-submanifold}, we get the following Theorem.

\begin{theorem}\label{th-submanifold-m>2}
Let $(M,g)$ be an $m$-dimensional submanifold of a space form $(N,h)$ admitting a concircular vector field $\xi$ with nonzero concircular function $f$. Assume that $m>2$. If $(M,g)$ is a generalized $\xi$-parallel submanifold, then it is a CMC pseudo-umbilical biharmonic submanifold. Moreover, if the sectional curvature of $(N,h)$ is non-positive, then $(M,g)$ is minimal.
\end{theorem}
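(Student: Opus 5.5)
Since $m>2$, Theorem \ref{th-submanifold} gives that $f$ is constant on $M$ and that $fH=\operatorname{grad}^N f$ along $M$. In the proof of Theorem \ref{th-submanifold} we found $\operatorname{grad}^N f=-c\,\xi$. Hence
\[
H=-\frac{c}{f}\,\xi \quad\text{along } M,
\]
with $f$ a nonzero constant on $M$. Moreover, the first equation of Theorem \ref{th-submanifold} in its full form is $fH=(\operatorname{grad}^N f)^\bot$. Comparing it with $fH=\operatorname{grad}^N f$ shows that $(\operatorname{grad}^N f)^\top=-c\,\xi^\top=0$, which is consistent with $f$ being constant on $M$. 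So if $c\neq0$, then $\xi$ is normal to $M$ along $M$. The whole plan is to exploit this normal concircular field with constant function.

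The first step is CMC. For $X\in\Gamma(TM)$ we have $X(h(\xi,\xi))=2h(\nabla^N_X\xi,\xi)=2f\,h(X,\xi)=0$, since $\xi$ is normal (or trivially when $c=0$). So $|\xi|$ is constant on $M$, and hence $|H|=|c|\,|\xi|/|f|$ is constant.

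The second step is to show that $H$ is parallel in the normal bundle. Write $\nabla^N_X H=-\frac{c}{f}\nabla^N_X\xi=-c\,X$, using that $f$ is constant on $M$. This is tangent, so $\nabla^\bot H=0$. Its tangential part gives the shape operator: $-A_H X=-cX$, so $A_H=c\,I$. Tracing, $m|H|^2=\operatorname{tr}A_H=mc$, hence $c=|H|^2$ and $A_H=|H|^2 I$. This proves pseudo-umbilicity.

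Next I show that $M$ is biharmonic. I will use the standard characterization of biharmonic submanifolds of space forms (Chen/Oniciuc):
\[
-\Delta^\bot H-\operatorname{Tr}B(\cdot,A_H\cdot)+mcH=0,\qquad \frac m2\operatorname{grad}|H|^2+2\operatorname{Tr}A_{\nabla^\bot_{\cdot}H}(\cdot)=0 .
\]
The tangential equation holds because $\nabla^\bot H=0$ and $|H|$ is constant. For the normal equation, $\Delta^\bot H=0$ and $\operatorname{Tr}B(\cdot,A_H\cdot)=c\operatorname{Tr}B=mcH$, so it reduces to $-mcH+mcH=0$. Alternatively one could invoke Remark \ref{remark}: $\tau(\mathbf i)=mH=-\frac{mc}{f}\xi$ is a constant multiple of $\xi\circ\mathbf i$ on $M$, and $f$ is constant there. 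This gives the equivalence with biharmonicity when $c\neq0$, and when $c=0$ minimality makes it trivial. The main obstacle is bookkeeping the sign conventions for $R^N$ and for the biharmonic equation consistently with the paper. I would double-check them through the identity $c=|H|^2$, which must come out nonnegative.

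Finally, for the curvature condition: if $c\le0$, then $|H|^2=c\le0$ forces $H=0$ (and also $c=0$ unless $H=0$). So $M$ is minimal. In the case $c=0$ this follows directly from $fH=\operatorname{grad}^N f=-c\,\xi=0$.
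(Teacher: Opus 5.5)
Your proof is correct and follows the paper's argument essentially step for step: $H=-(c/f)\,\xi$, then $\nabla^N_XH=-cX$, hence $\nabla^\perp H=0$, $A_H=cI$ and, by tracing, $|H|^2=c$, which gives CMC, pseudo-umbilicity and minimality for $c\le 0$. The only variation is that you check biharmonicity through the Chen--Oniciuc normal and tangential equations, whereas the paper goes via Remark~\ref{remark} using $\tau(\mathbf i)=-(mc/f)\,\xi$ (you also offer this as an alternative); you also treat $c=0$ explicitly, where that Remark's hypothesis $C\neq 0$ fails and the paper tacitly relies on $H=0$.
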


\begin{proof}
By using Theorem \ref{th-submanifold},  $f$ is constant along $M$, and
$H=(1/f)\operatorname{grad}^{N}f.$ Since $\operatorname{grad}^{N}f=-c\,\xi$, it follows that
$\tau(\mathbf{i})=mH=-(mc/f)\,\xi.$ Hence, by Remark \ref{remark}, $(M,g)$ is a biharmonic submanifold.
Moreover, from $H=-(c/f)\,\xi$, we obtain $\nabla^{N}_{X}H=-c\,X,$ for every tangent vector field $X$ on $M$. Consequently,
$(\nabla^{N}_{X}H)^\perp=0$ and $A_H(X)=c\,X.$ Therefore, $H$ is parallel in the normal bundle, and
$A_H=|H|^2I,$ since $|H|^2=c$. Thus, $(M,g)$ is a CMC pseudo-umbilical biharmonic submanifold.
Finally, if the sectional curvature $c$ of $(N,h)$ satisfies $c\le0$, then $(M,g)$ is minimal.
\end{proof}

\begin{example}
Let $\lambda(y)=\langle\alpha,y\rangle_{\mathbb{R}^{m+2}}$ be a smooth function on $\mathbb{S}^{m+1}$, where $\alpha=(\alpha_1,\ldots,\alpha_{m+2})\in\mathbb{R}^{m+2}$. Then,
$\xi=\operatorname{grad}^{\mathbb{S}^{m+1}}\lambda$ is a concircular vector field with concircular function $f=-\lambda$ \cite{YX}.
Consider the hypersurface
\[
M=\left\{y\in\mathbb{S}^{m+1}\; |\;
\alpha_1y_1+\cdots+\alpha_{m+2}y_{m+2}=\beta\right\},
\]
with $|\alpha|^2>\beta^2>0$. Since $\lambda=\beta$ on $M$, the function $f$ is constant along $M$, and the unit normal vector field of $M$ is given by
\[
\eta=\frac{1}{\sqrt{|\alpha|^2-\beta^2}}\,\xi.
\]
As $M$ is a totally umbilical hypersurface of $\mathbb{S}^{m+1}$, its
mean curvature vector is
\[
H=\frac{\beta}{|\alpha|^2-\beta^2}\,\xi.
\]
Hence, $fH=\operatorname{grad}^{N} f$ along $M$, if and only if $|\alpha|^2=2\beta^2.$
Therefore, by Theorem \ref{th-submanifold}, $(M,g)$ is a generalized $\xi$-parallel submanifold if and only if
$|\alpha|^2=2\beta^2.$ Furthermore, by Theorem \ref{th-submanifold-m>2}, if $m>2$, this hypersurface is also a proper biharmonic submanifold if $|\alpha|^2=2\beta^2$.
\end{example}

\begin{theorem}\label{th-submanifold-m=2}
Let $(M,g)$ be a surface immersed in a space form $(N(c),h)$ admitting a concircular vector field $\xi$ with nonzero concircular function $f$. Then $(M,g)$ is generalized $\xi$-parallel if and only if $fH-(\operatorname{grad}^{N}f)^\perp=0.$
In particular, if $(M,g)$ is minimal, then it is generalized $\xi$-parallel if and only if
$h(\operatorname{grad}^{N}f,\eta)=0$ along $M$, where $\eta$ is a unit normal vector field on $M$.
\end{theorem}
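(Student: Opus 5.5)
This follows from Theorem \ref{th-submanifold} by specializing to $m=2$. With $m=2$, the first equation $(m-2)\operatorname{grad}^{M} f = 0$ of the system holds identically, so only the normal equation $fH-(\operatorname{grad}^{N} f)^\perp=0$ is left. It is cleaner, though, to recheck the tangential/normal splitting of the $\xi$-tension field directly from (\ref{eq4.2}) and (\ref{eq4.3}). That way the reader can see why no tangential condition appears when $m=2$.

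Take $m=2$ in (\ref{eq4.2}) and (\ref{eq4.3}). The total $\xi$-tension is
\[
cf\big(\xi^\top+2\xi^\bot\big)+f\operatorname{grad}^M f+2f^2H .
\]
The key identity is $\operatorname{grad}^{N} f=-c\,\xi$, established at the end of the proof of Theorem \ref{th-submanifold}. Taking tangential parts gives $\operatorname{grad}^M f=(\operatorname{grad}^N f)^\top=-c\,\xi^\top$. The tangential component is therefore $cf\xi^\top-cf\xi^\top=0$ automatically. The normal component is $2cf\xi^\bot+2f^2H=2f\big(fH-(\operatorname{grad}^N f)^\perp\big)$. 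Since $f$ is nonzero, $\tau_\xi(\mathbf{i})=0$ if and only if $fH-(\operatorname{grad}^{N}f)^\perp=0$. This proves the first assertion.

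For the second assertion, suppose $M$ is minimal, so $H=0$. The condition then reduces to $(\operatorname{grad}^N f)^\perp=0$. When the codimension is one, the normal bundle is spanned by a unit normal $\eta$, so this is equivalent to $h(\operatorname{grad}^N f,\eta)=0$ along $M$. In higher codimension the same statement holds if it is read for every unit normal field $\eta$.

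The only delicate point is the sign and normalization bookkeeping behind the cancellation of the tangential terms. This rests on the identity $\operatorname{grad}^N f=-c\,\xi$, which comes from comparing the two expressions for $R^N(X,Y)\xi$; I would recheck that identity first. Everything else is a direct restatement of Theorem \ref{th-submanifold}.
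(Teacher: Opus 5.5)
Your proposal is correct and follows the paper's route: the theorem is the case $m=2$ of Theorem \ref{th-submanifold}, resting on (\ref{eq4.2}), (\ref{eq4.3}) and $\operatorname{grad}^{N} f=-c\,\xi$. Your explicit check that the tangential part is $-(m-2)f\operatorname{grad}^{M} f$ and the normal part is $mf\big(fH-(\operatorname{grad}^{N} f)^\perp\big)$ spells out what the paper leaves implicit, and your codimension remark for the minimal case is a fair sharpening; note also that dividing by $f$ requires $f$ to be nowhere zero along $M$, which is how ``nonzero'' has to be read.
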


\begin{example}
Consider the surface $(M,g)$ in $\mathbb{S}^{3}$ parametrized by
\[
(x_1,x_2)\mapsto (x_1,x_2,\beta_1x_1+\beta_2x_2),
\]
where $\beta_1$ and $\beta_2$ are constants. A unit normal vector field is given by
\[
\eta=
\frac{1+x_1^2+x_2^2+(\beta_1x_1+\beta_2x_2)^2}
{2\sqrt{1+\beta_1^2+\beta_2^2}}
\left(-\beta_1,-\beta_2,1\right).
\]
Let
$\lambda(y)=\langle\alpha,y\rangle_{\mathbb{R}^{4}}$
be a smooth function on $\mathbb{S}^{3}$, where
$\alpha=(\alpha_1,\ldots,\alpha_4)\in\mathbb{R}^{4}.$
Then,
$\xi=\operatorname{grad}^{\mathbb{S}^{3}}\lambda$
is a concircular vector field with concircular function
$f=-\lambda.$
Moreover, the normal component of $\xi$ along $M$ is
\[
\xi^\perp
=
\frac{-\alpha_1\beta_1-\alpha_2\beta_2+\alpha_3}
{1+\beta_1^2+\beta_2^2}
\left(-\beta_1,-\beta_2,1\right).
\]
Since $(M,g)$ is a totally geodesic surface of $\mathbb{S}^{3}$, we have
$H=0.$
By the characterization of generalized $\xi$-parallel surfaces,
\[
fH-(\operatorname{grad}^{\mathbb{S}^{3}}f)^\perp=0,
\]
and since
$(\operatorname{grad}^{\mathbb{S}^{3}}f)^\perp=-\xi^\perp,$
the surface $(M,g)$ is generalized $\xi$-parallel if and only if
$\xi^\perp=0.$ Therefore,
$(M,g)$ is generalized $\xi$-parallel if and only if $\alpha_3=\alpha_1\beta_1+\alpha_2\beta_2.$
\end{example}

\begin{remark}
The previous examples show that the class of biharmonic hypersurfaces is strictly larger than the class of generalized $\xi$-parallel hypersurfaces. Moreover, Theorem \ref{th-submanifold-m>2} provides a constructive method for obtaining CMC pseudo-umbilical biharmonic submanifolds in space forms.
\end{remark}

\end{document}